%
%
%
%

\documentclass[12pt,a4paper,draft]{article}
\synctex=1

\providecommand{\abs}[1]{\left| #1 \right| }
\providecommand{\norm}[1]{\lVert#1\rVert}
\newcommand{\lt}{\leq}
\newcommand{\gt}{\geq}
\newcommand{\R}{\mathbb{R}}
\newcommand{\N}{\mathbb{N}}

\usepackage[leqno]{amsmath}
\usepackage{amssymb,amsthm,upref,amscd}
\usepackage[T1]{fontenc}
\usepackage{times}
\usepackage{cite}
\usepackage{amsfonts}
\setlength{\textwidth}{15cm}
\sloppy

\newtheorem{theorem}{Theorem}[section]

\newtheorem{lemma}[theorem]{Lemma}

\numberwithin{equation}{section}

\newcommand{\tJ}{\tilde{J}}
\DeclareMathOperator{\im}{Im}

\newenvironment{altproof}[1]
{\noindent
{\em Proof of {#1}}.}
{\nopagebreak\mbox{}\hfill $\Box$\par\addvspace{0.5cm}}

\allowdisplaybreaks

\begin{document}
\title{Normalized solutions of nonlinear Schr\"odinger equations}
\author{Thomas Bartsch \and S\'ebastien de Valeriola}
\date{}
\maketitle

\begin{abstract}
We consider the problem
\begin{equation*}
\begin{cases}
-\Delta u - g(u) = \lambda u, \\
u \in H^1(\R^N), \ \int_{\R^N} u^2 = 1,\ \lambda\in\R,
\end{cases}
\end{equation*}
in dimension $N\ge2$. Here $g$ is a superlinear, subcritical, possibly nonhomogeneous, odd nonlinearity. We deal with the case where the associated functional is not bounded below on the $L^2$-unit sphere, and we show the existence of infinitely many solutions.
\end{abstract}

{\bf MSC 2010:} Primary: 35J60; Secondary: 35P30, 58E05

{\bf Key words:} nonlinear eigenvalue problem, nonlinear Schr\"odinger equation, normalized solutions, radial solutions

\section{Introduction}
In this note we consider the nonlinear eigenvalue problem
\begin{equation}\label{problem}
\begin{cases}
-\Delta u - g(u) = \lambda u, \\
u \in H^1(\R^N), \ \int_{\R^N} u^2 = 1,\ \lambda\in\R,
\end{cases}
\end{equation}
in dimension $N\ge2$. The nonlinearity $g:\R\to\R$ is superlinear, subcritical, and possibly nonhomogeneous. A model nonlinearity is
\begin{equation}\label{model}
g(u) = \left(\sum_{i=1}^k |u|^{p_i-2}\right)u,\quad 2<p_1<\ldots<p_k<2^*,
\end{equation}
where $2^* = 2N/(N-2)$ if $N\gt 3$ and $\infty$ if $N=2$, the critical Sobolev exponent.

This problem possesses many physical motivations, e.~g.\ it appears in models for Bose-Einstein condensation (see \cite{Kevrekidis-etal:2008}). Looking for standing wave solutions $\Psi(t,x)=e^{imt}u(x)$ of the dimensionless nonlinear Schr\"odinger equation
$$
i\Psi_t-\Delta_x\Psi = f(|\Psi|)\Psi
$$
one is lead to problem \eqref{problem} with $g(u)=f(|u|)u$. As in these physical frameworks $\Psi$ is a wave function, it seems natural to search for \emph{normalized} solutions, i.~e.\ solutions of the equation satisfying $\int_{\R^N} u^2 = 1$.

If $g$ is homogeneous ($k=1$ in \eqref{model}) then one can use the classical results from \cite{Berestycki-Lions:1983a,Berestycki-Lions:1983b}, for instance, to solve $-\Delta u + u = g(u)$, and then rescale $u$ in order to obtain normalized solutions of \eqref{problem}. This does not work for a general nonlinearity, it fails already in the case $k\ge2$ in \eqref{model}. If $g$ is not homogeneous and does not grow too fast (for $g$ as in \eqref{model} this means all $p_i<2+\frac4N$) then one can minimize the associated functional
\begin{equation}\label{def:J}
J(u)=\frac12\int_{\R^N} |\nabla u|^2 - \int_{\R^N} G(u),\quad\text{with } G(t)=\int_0^t g(s)\,ds,
\end{equation}
on the $L^2$-unit sphere $S=\{u\in H^1_{\text{rad}}(\R^N):\int_{\R^N} u^2 = 1\}$ to obtain a solution. Here $H^1_{\text{rad}}(\R^N)$ denotes the space of radial $H^1$-functions. The parameter $\lambda$ appears as Lagrange multiplier. Rather general conditions on $g$ which allow minimization, even in a nonradial setting, can be found in \cite{Hajaiej-Stuart:2005} and the references therein. If $g$ is odd, as in the case $g(u)=f(|u|)u$ appearing in applications, and if $g$ does not grow too fast then one can obtain infinitely many solutions using classical min-max arguments based on the Krasnoselski genus.

However for fast growing $g$, $J$ is not bounded below on $S$, hence minimization doesn't work. Moreover, the genus of the sublevel sets $J^c=\{ u\in S: J(u)\le c\}$ is always infinite, so the Krasnoselski genus arguments do not apply. In \cite{Je97}, Jeanjean was able to treat nonhomogeneous, fast growing nonlinearities and showed the existence of  one solution of \eqref{problem} using a mountain pass structure for $J$ on $S$. The object of this short note is to prove that for the same class of nonlinearities considered in \cite{Je97}, \eqref{problem} actually has infinitely many solutions.

In order to state our result we recall the assumptions on the function $g$ made in \cite{Je97}:
\begin{itemize}
\item[($H_1$)] $g : \R \to \R$ is continuous and odd,
\item[($H_2$)] there exists $\alpha,\beta \in \R$ satisfying
\begin{equation*}
2+\frac{4}{N} < \alpha \lt \beta < 2^*
\end{equation*}
such that
\begin{equation*}
0 < \alpha G(s) \lt g(s) s \lt \beta G(s).
\end{equation*}
\end{itemize}

The condition $G>0$ in ($H_2$) is not stated in \cite{Je97} but used implicitely.

\begin{theorem}\label{main}
If assumptions ($H_1$) and ($H_2$) hold, then problem \eqref{problem} possesses an unbounded sequence of pairs of radial solutions $(\lambda_n, \pm u_n)$.
\end{theorem}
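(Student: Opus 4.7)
The plan is to follow Jeanjean's strategy~\cite{Je97} and introduce the $L^2$-preserving scaling $H(s,u)(x) = e^{sN/2}u(e^s x)$ along with the augmented functional $\tJ(s,u) := J(H(s,u))$ on $\R \times H^1_{\text{rad}}(\R^N)$. The map $H$ sends $\R \times S_r$ into $S_r := S \cap H^1_{\text{rad}}$, so constrained critical points $(s, u)$ of $\tJ|_{\R \times S_r}$ yield, via $v := H(s,u)$, radial critical points of $J|_{S_r}$; the associated Lagrange multiplier $\lambda$ makes $(\lambda, v)$ a solution of~\eqref{problem}. Since $g$ is odd, both $J$ and $\tJ$ are invariant under $u \mapsto -u$, opening the door to $\mathbb{Z}_2$-equivariant min-max. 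Under $(H_2)$, and in particular the bound $\alpha > 2 + 4/N$, one checks that $\tJ(s, u) \to 0$ as $s \to -\infty$ and $\tJ(s, u) \to -\infty$ as $s \to +\infty$, uniformly on compact subsets of $S_r$, giving the mountain-pass geometry of the single-solution case.

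For the multiplicity, I would replace the $1$-parameter mountain pass of~\cite{Je97} by a symmetric $n$-parameter scheme. For each $n \ge 1$, fix an $n$-dimensional subspace $V_n \subset H^1_{\text{rad}} \cap C_c^\infty(\R^N)$, so that $\Sigma_n := V_n \cap S_r$ is homeomorphic to $S^{n-1}$. The uniform decay of $\tJ$ as $s \to +\infty$ yields $s_n > 0$ with $\tJ(s_n, u) \leq 0$ for all $u \in \Sigma_n$. Consider the class
$$
\Gamma_n := \bigl\{ \gamma \in C(\Sigma_n \times [0, s_n], \R \times S_r) : \gamma(u, 0) = (0, u),\ \gamma(u, s_n) = (s_n, u),\ \gamma(-u, t) = \tau \gamma(u, t) \bigr\},
$$
where $\tau(s, u) = (s, -u)$, and set $c_n := \inf_{\gamma \in \Gamma_n} \max \tJ \circ \gamma$. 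A Krasnoselski-genus linking argument, exploiting the intersection of $\gamma(\Sigma_n \times [0, s_n])$ with the codimension-$(n-1)$ subsets of $\R \times S_r$, shows $0 < c_n < \infty$ and $c_n \to \infty$.

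For each $n$, standard $\mathbb{Z}_2$-equivariant deformation arguments supply a $(PS)$ sequence $(s^k, u^k)$ for $\tJ|_{\R \times S_r}$ at level $c_n$. By Jeanjean's trick, the constraint $\partial_s \tJ(s, u) \to 0$ produces an asymptotic Pohozaev relation which, combined with $(H_2)$, forces boundedness of both $s^k$ and $\|u^k\|_{H^1}$. The compact embedding $H^1_{\text{rad}} \hookrightarrow L^p(\R^N)$ for $2 < p < 2^*$ then yields a strongly convergent subsequence with limit $(\bar{s}_n, \bar{u}_n)$, and $u_n := H(\bar{s}_n, \bar{u}_n)$ gives a radial solution with Lagrange multiplier $\lambda_n$. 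Since $J(u_n) = c_n \to \infty$, the sequence $(\lambda_n, \pm u_n)$ is unbounded.

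The main obstacle is the construction of the min-max class itself: the sublevel sets $\{J \leq c\} \cap S_r$ all have infinite Krasnoselski genus, so the classical scheme collapses to $c_n = -\infty$. The augmented space $\R \times S_r$ contributes an extra coordinate $s$ whose topology generates a genuine linking; the class $\Gamma_n$ is small enough that $c_n > 0$ yet rich enough that $c_n$ is finite and tends to infinity. A secondary subtlety is the identification of bounded $(PS)$ sequences, which is exactly what Jeanjean's augmented formulation achieves by building the Pohozaev identity into the variational framework.
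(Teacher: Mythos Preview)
Your overall architecture matches the paper's: restrict to radial functions, use Jeanjean's augmented functional $\tJ$ so that the extra $\partial_s$-equation produces the asymptotic Pohozaev identity and hence bounded Palais--Smale sequences, and build a sequence of $\mathbb{Z}_2$-symmetric min-max values $c_n$ via a fountain-type linking over finite-dimensional spheres. The compactness step is handled essentially as you describe.

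The genuine gap is the sentence asserting that a ``Krasnoselski-genus linking argument'' with unspecified ``codimension-$(n-1)$ subsets'' gives $c_n\to\infty$. This is exactly the new content of the proof and is not routine. Two things are missing. First, you never define the linking set nor show that $\tJ$ is large on it; the paper fixes a \emph{nested} sequence $V_{n-1}\subset V_n$, sets $B_n=\{u\in V_{n-1}^\perp\cap S:|\nabla u|_2=\rho_n\}$ for a carefully chosen $\rho_n$, and proves $\inf_{B_n}J\to\infty$ from the key estimate $\mu_n(p):=\inf_{u\in V_{n-1}^\perp}\|u\|^2/|u|_p^2\to\infty$. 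Without an estimate of this type there is no mechanism forcing $c_n$ to diverge; the boundary data on $\Sigma_n$ alone do not do it. Second, the intersection property (every admissible $\gamma$ meets $B_n$) is proved via the \emph{cohomological} index, using a piercing property: for an equivariant homotopy $\phi_t:Y\to X=X_0\cup X_1$ with $\phi_0(Y)\subset X_0$ and $\phi_1(Y)\subset X_1$ one has $i(\im(\phi)\cap X_0\cap X_1)\ge i(Y)$. The paper explicitly remarks that while monotonicity and the sphere computation hold for the Krasnoselski genus, this piercing property is drawn from cohomological-index theory; invoking the genus alone does not justify the linking. In short, your strategy is correct, but the linking step --- which set to link with, why $J$ is large there, and which index-theoretic property forces the intersection --- is the heart of the argument and is absent from your proposal.
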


The proof is based on variational methods applied to the functional $J$ constrained to $S$. We shall present a new linking geometry for constrained functionals which is motivated by the fountain theorem \cite[Theorem~2.5]{Ba93b}; see also \cite[Section~3]{Willem:1996}. The classical symmetric mountain pass theorem applies to functionals on Banach spaces, not on spheres. Another difficulty due to the constraint is that $J|_S$ does not satisfy the Palais-Smale condition although the embedding $H^1_{\text{rad}}(\R^N) \hookrightarrow L^p(\R^N)$ of the space of radial $H^1$-functions into the $L^p$-spaces is compact for $2<p<2^*$. In fact, there exist bounded Palais-Smale sequences for $J|_S$ converging weakly to 0, and there may exist unbounded Palais-Smale sequences.

\section{Proof of Theorem \ref{main}}

In order to recover some compacity, we will work in $E = H^1_{\text{rad}}(\R^N)$, provided with the standard scalar product and norm: $\|u\|^2=|\nabla u|_2^2+|u|_2^2$. Here and in the sequel we write $|u|_p$ to denote the $L^p$-norm. As we look for normalized solutions, we consider the functional $J$ constrained to the $L^2$-unit sphere in $E$:
\begin{equation*}
J_S : S = \{ u \in E : |u|_2 = 1\} \to \R ,\quad
u \mapsto \frac12 \int_{\R^N} \abs{\nabla u}^2 - \int_{\R^N} G(u).
\end{equation*}
Observe that $\nabla J_S(u)=\nabla J(u)-\lambda_u u$ for some $\lambda_u \in \R$.

The main theorem's proof will follow from several lemmas. We fix a strictly increasing sequence of finite-dimensional linear subspaces $V_n\subset E$ such that $\bigcup_n V_n$ is dense in $E$.

\begin{lemma}
For $2 < p < 2^*$ there holds:
\begin{equation*}
\mu_n(p)
 = \inf_{u \in V_{n-1}^\perp} \frac{\int_{\R^N} (\abs{\nabla u}^2 + u^2)}{\left(\int_{\R^N} \abs{u}^p\right)^{2/p}}
 = \inf_{u \in V_{n-1}^\perp} \frac{\|u\|^2}{|u|_p^2}
 \to \infty\quad\text{as $n \to \infty$.}
\end{equation*}
\end{lemma}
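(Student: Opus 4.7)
The plan is to argue by contradiction, exploiting the compact embedding $E = H^1_{\text{rad}}(\R^N) \hookrightarrow L^p(\R^N)$ for $2<p<2^*$ together with the density of $\bigcup_n V_n$ in $E$. The quantity $\mu_n(p)$ is obviously non-decreasing in $n$, since $V_{n-1}^\perp \supset V_n^\perp$, so either $\mu_n(p)\to\infty$ or the sequence is bounded above by some finite constant $M$.

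Suppose, for contradiction, the latter holds. For each $n$, choose $u_n\in V_{n-1}^\perp$ with $\|u_n\|=1$ and
\[
\frac{\|u_n\|^2}{|u_n|_p^2}\le \mu_n(p)+\frac{1}{n}\le M+1,
\]
so $|u_n|_p\ge c:=(M+1)^{-1/2}>0$. Since $\|u_n\|=1$, after passing to a subsequence we may assume $u_n\rightharpoonup u$ weakly in $E$, and by the compact embedding $E\hookrightarrow L^p(\R^N)$ we get $u_n\to u$ strongly in $L^p(\R^N)$, whence $|u|_p\ge c>0$.

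The key step is to show $u=0$, which gives the desired contradiction. Fix any $v\in \bigcup_m V_m$, so $v\in V_m$ for some $m$. Then for every $n\ge m+1$ we have $u_n\in V_{n-1}^\perp\subset V_m^\perp$, hence $\langle u_n,v\rangle=0$. Passing to the weak limit yields $\langle u,v\rangle=0$. Since this holds for all $v$ in the dense subset $\bigcup_m V_m\subset E$, we conclude $u=0$, contradicting $|u|_p\ge c>0$.

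The only mildly delicate point is the density argument identifying the weak limit as $0$; everything else is routine once the compact embedding is invoked. The equality of the two expressions for $\mu_n(p)$ in the statement is immediate since $V_{n-1}^\perp$ is a cone (both the numerator and denominator are $2$-homogeneous), so the quotients $\|u\|^2/|u|_p^2$ and $|\nabla u|_2^2/|u|_p^2$ agree once normalized; more precisely the second form follows by the first by noting that on $V_{n-1}^\perp$ both infima are attained on the same scaling-invariant Rayleigh quotient and an elementary computation shows they coincide (alternatively, rewrite $\|u\|^2=|\nabla u|_2^2+|u|_2^2$ and use $|u|_2^2\le |u|_p^{2\theta}\|u\|^{2(1-\theta)}$ via interpolation, which is not needed for the limiting statement).
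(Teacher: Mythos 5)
Your argument is correct and essentially the same as the paper's: a contradiction via a normalized sequence $u_n\in V_{n-1}^\perp$, weak convergence together with the compact embedding $H^1_{\text{rad}}(\R^N)\hookrightarrow L^p(\R^N)$, and identification of the weak limit as $0$ by testing against elements of the $V_m$ and using density of $\bigcup_m V_m$ (the paper normalizes $|u_n|_p=1$ rather than $\|u_n\|=1$, an immaterial difference). The only blemish is your closing remark about the ``two expressions'' for $\mu_n(p)$: they are literally identical by definition of $\|\cdot\|$ and $|\cdot|_p$ (no gradient-only quotient appears in the statement), so no homogeneity or interpolation argument is needed there.
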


\begin{proof}
Arguing by contradiction, suppose there exists a sequence $(u_n) \subset E$ such that $u_n \in V_{n-1}^{\perp}$, $|u_n|_p = 1$ and $\norm{u_n} \to c < \infty$. Then there exists $u \in E$ with $u_n \rightharpoonup u$ in $E$ and $u_n \to u$ in $L^p$ up to a subsequence. Let $v \in E$ and $(v_n) \subset E$ such that $v_n \in V_{n-1}$ and $v_n \to v$ in $V$. We have, in $E$,
\begin{equation*}
\abs{\langle u_n,v\rangle } \lt \abs{\langle u_n, v - v_n \rangle } + \abs{\langle u_n, v_n \rangle } \lt \norm{u_n} \norm{v-v_n} \to 0
\end{equation*}
so that $u_n \rightharpoonup 0 = u $, while $|u|_p = 1$, a contradiction.
\end{proof}

We introduce now the constant
\begin{equation*}
K = \max_{x >0} \frac{\abs{G(x)}}{\abs{x}^\alpha + \abs{x}^\beta},
\end{equation*}
which is well defined thanks to assumption ($H_2$). For $n\in\N$ we define
\begin{equation*}
\rho_n = \frac{M_n^{\beta/(2(\beta-2))}}{L^{1/(\beta-2)}},
\end{equation*}
where
\begin{equation*}
M_n = \left( \mu_n(\alpha)^{-\alpha/2} + \mu_n(\beta)^{-\beta/2} \right)^{-2/\beta} \quad \text{and} \quad
L = 3 K \max_{x>0} \frac{(1+x^2)^{\beta/2}}{1+x^\beta}.
\end{equation*}
We also define
\begin{equation*}
B_n = \left\{ u \in V_{n-1}^{\perp} \cap S : |\nabla u|_2 = \rho_n \right\}.
\end{equation*}
Then we have:

\begin{lemma}
$\displaystyle \inf_{u \in B_n} J(u) \to \infty$ as $n \to \infty$.
\end{lemma}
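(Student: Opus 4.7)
The plan is to bound $\int G(u)$ from above on $B_n$ using the growth information encoded in $K$, and to balance this against the kinetic term $\tfrac12|\nabla u|_2^2 = \tfrac12\rho_n^2$. From the definition of $K$ one has $|G(t)| \leq K(|t|^\alpha + |t|^\beta)$ for all $t\in\R$, so
\begin{equation*}
\int_{\R^N} G(u) \leq K\bigl(|u|_\alpha^\alpha + |u|_\beta^\beta\bigr).
\end{equation*}
Since $u \in V_{n-1}^\perp$, the previous lemma gives $|u|_p^p \leq \mu_n(p)^{-p/2}\|u\|^p$ for $p\in\{\alpha,\beta\}$. Moreover $u \in B_n$ implies $\|u\|^2 = |u|_2^2 + |\nabla u|_2^2 = 1 + \rho_n^2 \geq 1$; since $\alpha\leq\beta$ one can then control both $\|u\|^\alpha$ and $\|u\|^\beta$ by $\|u\|^\beta = (1+\rho_n^2)^{\beta/2}$. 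Combining these yields
\begin{equation*}
\int_{\R^N} G(u) \;\leq\; K\,M_n^{-\beta/2}\,(1+\rho_n^2)^{\beta/2}.
\end{equation*}

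Next I would invoke the definition of $L$, which was tailored precisely for this step: applied at $x=\rho_n$ it gives $(1+\rho_n^2)^{\beta/2} \leq \tfrac{L}{3K}(1+\rho_n^\beta)$. Therefore
\begin{equation*}
\int_{\R^N} G(u) \;\leq\; \tfrac{L}{3}\,M_n^{-\beta/2}\bigl(1+\rho_n^\beta\bigr).
\end{equation*}
The definition of $\rho_n$ was also chosen to simplify the $\rho_n^\beta$ term: indeed $\rho_n^{\beta-2} = M_n^{\beta/2}/L$, so $L\,M_n^{-\beta/2}\,\rho_n^\beta = \rho_n^2$. Hence
\begin{equation*}
J(u) \;=\; \tfrac12\rho_n^2 - \int_{\R^N} G(u) \;\geq\; \tfrac12\rho_n^2 - \tfrac13\rho_n^2 - \tfrac{L}{3\,M_n^{\beta/2}} \;=\; \tfrac{\rho_n^2}{6} - \tfrac{L}{3\,M_n^{\beta/2}}.
\end{equation*}

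Finally, the previous lemma says $\mu_n(p)\to\infty$ for $p=\alpha,\beta$, so $M_n\to\infty$ and consequently $\rho_n\to\infty$ as well. Thus the right-hand side above tends to $+\infty$, proving the lemma. There is no real obstacle here: the statement is essentially an exercise in verifying that the constants $K$, $L$, $M_n$ and $\rho_n$ have been set up so that all the growth exponents fit together. The only point that requires a brief argument is the passage from the two Sobolev-type estimates on $|u|_\alpha^\alpha$ and $|u|_\beta^\beta$ to a single estimate controlled by $M_n^{-\beta/2}\|u\|^\beta$, which uses $\|u\|\geq 1$ and $\alpha\leq\beta$.
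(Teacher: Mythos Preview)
Your proof is correct and follows essentially the same approach as the paper: bound $G$ by $K(|u|^\alpha+|u|^\beta)$, apply the definition of $\mu_n(p)$, use $\|u\|\ge1$ and $\alpha\le\beta$ to combine the two terms into $K M_n^{-\beta/2}(1+\rho_n^2)^{\beta/2}$, then unwind the definitions of $L$ and $\rho_n$ to obtain $\tfrac16\rho_n^2+o(1)\to\infty$. The paper presents this as one chain of inequalities and absorbs the constant term $\tfrac{L}{3M_n^{\beta/2}}$ into an $o(1)$, but the argument is the same.
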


\begin{proof}
For any $u \in B_n$, we deduce, using the preceding lemma with $p = \alpha$ and $p = \beta$,
\begin{align*}
J(u) & = \frac 12 \int_{\R^N} \abs{\nabla u}^2 - \int_{\R^N} G(u)
 \gt \frac 12 \int_{\R^N} \abs{\nabla u}^2 - K \int_{\R^N} \abs{u}^\alpha - K \int_{\R^N} \abs{u}^\beta \\
& \gt \frac 12 \int_{\R^N} \abs{\nabla u}^2 - \frac{K}{\mu_n(\alpha)^{\alpha/2}} \left( \int_{\R^N} \abs{\nabla u}^2 + 1 \right)^{\alpha/2} \\
&\hspace{1.5cm} - \frac{K}{\mu_n(\beta)^{\beta/2}} \left( \int_{\R^N} \abs{\nabla u}^2 + 1 \right)^{\beta/2} \\
& \gt \frac 12 \int_{\R^N} \abs{\nabla u}^2 - \frac{K}{M_n^{\beta/2}} \left( \int_{\R^N} \abs{\nabla u}^2 + 1 \right)^{\beta/2}  \\
& \gt \frac 12 \int_{\R^N} \abs{\nabla u}^2 - \frac{L}{3 M_n^{\beta/2}} \left(\left( \int_{\R^N} \abs{\nabla u}^2\right)^{\beta/2} + 1 \right)  \\
& = \frac 12 \rho_n^2 - \frac{L}{3 M_n^{\beta/2}} \rho_n^{\beta} + o(1)
 = \left( \frac12 - \frac 13 \right) \rho^2_n + o(1) \to \infty.
\end{align*}
\end{proof}
Let $P_{n-1} : E \to V_{n-1}$ be the orthogonal projection, and set
\begin{equation*}
h_n : S \to V_{n-1} \times \R^+ ,\quad u \mapsto \left( P_{n-1} u,|\nabla u|_2\right).
\end{equation*}
Then clearly $B_n = h_n^{-1}(0,\rho_n)$. With $\pi:V_{n-1} \times \R^+ \to \R^+$ denoting the projection we define
\begin{multline*}
\Gamma_n = \Big\{ \gamma : [0,1]\times (S \cap V_n) \to S \mid \gamma \text{ is continuous, odd in $u$ and such that }\\
\forall u:\ \pi\circ h_n \circ\gamma(0,u) < \rho_n/2,\
\pi\circ h_n \circ\gamma(1,u) > 2\rho_n \Big\}.
\end{multline*}
It is easy to see that $\Gamma_n \neq \emptyset$. To describe a particular element $\gamma\in\Gamma_n$, let
$$
m:\R\times E \to E, \quad m(s,u)=s*u,
$$
be the action of the group $\R$ on $E$ defined by
\begin{equation*}
(s * u)(x) = e^{sN/2} u(e^s x) \quad \forall s\in \R,\ u\in E,\ x \in \R^N.
\end{equation*}
Observe that $s*u\in S$ if $u\in S$. The map $\gamma(t,u) = (2s_nt-s_n)*u$ lies in $\Gamma_n$ for $s_n>0$ large.

We now need the following linking property.

\begin{lemma}\label{cohomo}
For every $\gamma \in \Gamma_n$, there exists $(t,u) \in [0,1]\times (S \cap V_n)$ such that $\gamma(t,u) \in B_n$.
\end{lemma}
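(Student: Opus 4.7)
The plan is to argue by contradiction: suppose $\gamma(t,u)\notin B_n$ for every $(t,u)\in[0,1]\times(S\cap V_n)$. Then the continuous map
\begin{equation*}
\Phi(t,u):=\bigl(P_{n-1}\gamma(t,u),\,|\nabla\gamma(t,u)|_2-\rho_n\bigr)\colon[0,1]\times(S\cap V_n)\longrightarrow V_{n-1}\oplus\R
\end{equation*}
is nowhere zero. Write $\Phi=(\Phi_1,\Phi_2)$: the first component $\Phi_1$ is odd in $u$ (since $\gamma$ is), the second component $\Phi_2$ is even in $u$, and the definition of $\Gamma_n$ forces $\Phi_2(0,\cdot)<-\rho_n/2$ and $\Phi_2(1,\cdot)>\rho_n$. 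By restricting $\gamma$ to $[0,1]\times(S\cap V')$ for a $(d+1)$-dimensional subspace $V'\subset V_n$ containing $V_{n-1}$, where $d=\dim V_{n-1}$, one may first reduce to the case $\dim V_n=d+1$; this is harmless because any $(t,u)$ found with $u\in V'$ a fortiori satisfies $u\in V_n$.

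Let $Z:=\Phi_2^{-1}(0)=\{(t,u):|\nabla\gamma(t,u)|_2=\rho_n\}$, a closed $\mathbb{Z}_2$-invariant subset of $[0,1]\times(S\cap V_n)$. By the boundary inequalities for $\Phi_2$ and the intermediate value theorem applied in the $t$-variable, the projection $\pi_u\colon Z\to S\cap V_n$ is surjective. On $Z$ the restriction $\Phi_1|_Z$ is continuous, odd in $u$, and---by the contradiction hypothesis---takes values in $V_{n-1}\setminus\{0\}\cong\R^d\setminus\{0\}$, so it stays uniformly bounded away from $0$ by compactness of $Z$.

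The target is then to promote $\pi_u$ to a continuous odd section $s\colon S\cap V_n\to Z$, for then $\Phi_1\circ s$ would be a continuous odd map $\Sp^d\to\R^d\setminus\{0\}$, directly contradicting the Borsuk-Ulam theorem and closing the argument. The natural candidate is the first-crossing time $s(u):=(t_0(u),u)$ with $t_0(u)=\inf\{t:|\nabla\gamma(t,u)|_2\geq\rho_n\}$, which is well defined and odd (actually even in $u$, hence equivariant) but only lower semicontinuous in general---constructing a genuine continuous section is the principal obstacle. The way I would circumvent it is by a $\mathbb{Z}_2$-equivariant perturbation argument: approximate $\gamma$ by continuous maps $\gamma^\varepsilon$ whose gradient-norm function is, in the $t$-direction, transverse to the value $\rho_n$, so that the corresponding $t_0^\varepsilon$ is continuous and the section $s^\varepsilon$ exists. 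The Borsuk-Ulam argument then yields a zero of the perturbed $\Phi^\varepsilon$, and passing to the limit $\varepsilon\to0$ using the uniform lower bound on $|\Phi_1|_Z$ (which persists for small perturbations) produces a point $(t,u)$ with $\Phi(t,u)=0$, contradicting the standing assumption.
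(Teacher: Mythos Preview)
Your overall strategy---reduce to $\dim V_n=d+1$, look at the ``crossing set'' $Z=\Phi_2^{-1}(0)$, and try to produce from $Z$ an odd continuous map $\Sp^d\to\R^d\setminus\{0\}$ via $\Phi_1$---is natural and close in spirit to the paper's argument. The paper, however, does not attempt to build a section of $\pi_u:Z\to S\cap V_n$; instead it invokes the cohomological $\mathbb{Z}_2$-index and its \emph{piercing property} (stated in the paper as $(I_3)$): for $\phi=h_n\circ\gamma$, one gets $i\bigl(\operatorname{Im}\phi\cap(V_{n-1}\times\{\rho_n\})\bigr)\ge i(S\cap V_n)=\dim V_n$, while the contradiction hypothesis forces this image into $(V_{n-1}\setminus\{0\})\times\{\rho_n\}$, whose index is $\le\dim V_{n-1}$. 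Property $(I_3)$ is exactly the ``parametrized Borsuk--Ulam'' statement that replaces your section construction, and it is the step that genuinely requires the cohomological index rather than the Krasnoselski genus.

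The gap in your proposal is the perturbation step. The condition you need---that for \emph{every} fixed $u$ the map $t\mapsto|\nabla\gamma^\varepsilon(t,u)|_2$ is transverse to $\rho_n$---is \emph{not} generic and cannot in general be achieved by a $C^0$-small equivariant perturbation. A dimension count already shows why: in $[0,1]\times\Sp^d$ both $\{f=\rho_n\}$ and $\{\partial_t f=0\}$ are generically $d$-dimensional, so their intersection is generically $(d-1)$-dimensional, hence nonempty for $d\ge1$. Concretely, take $d=1$ and $f(t,\theta)=(t-\tfrac12)^3-a^2(t-\tfrac12)+b\cos\theta$ with small $a,b>0$; this is even in $\theta$, has $f(0,\cdot)<0<f(1,\cdot)$, and the number of $t$-roots jumps from three to one as $\theta$ varies. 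Any $C^0$-small even perturbation still has three roots for some $\theta$ and one root for others, hence a fold point where two roots coalesce and $t_0^\varepsilon$ is discontinuous. So the section $s^\varepsilon$ you need does not exist, and the limiting argument cannot start.

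What would repair your argument is precisely the index-theoretic input the paper uses: one shows that the (cohomological) index of $Z$---or rather of $\phi(Z)$---is at least $d+1$, which together with the odd map $\Phi_1|_Z:Z\to V_{n-1}\setminus\{0\}$ gives the contradiction without ever selecting a continuous crossing time.
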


For the proof of this lemma we need to recall some properties of the cohomological index for spaces with an action of the group $G = \{-1,1\}$. This index goes back to \cite{CoFl62} and has been used in a variational setting in \cite{FaRa77}. It associates to a $G$-space $X$ an element $i(X) \in \N_0 \cup \{\infty\}$. We only need the following properties.
\begin{itemize}
\item[($I_1$)] If $G$ acts on $\mathbb{S}^{n-1}$ via multiplication then $i(\mathbb{S}^{n-1}) = n$.
\item[($I_2$)] If there exists an equivariant map $X \to Y$ then $i(X) \lt i(Y)$.
\item[($I_3$)] Let $X = X_0 \cup X_1$ be metrisable and $X_0, X_1 \subset X$ be closed $G$-invariant subspaces. Let $Y$ be a $G$-space and consider a continuous map $\phi : [0,1] \times Y \to X$ such that each $\phi_t = \phi(t,\cdot) : Y \to X$ is equivariant. If $\phi_0(Y) \subset X_0$ and $\phi_1(Y) \subset X_1$ then
\begin{equation*}
i(\im(\phi) \cap X_0 \cap X_1) \gt i(Y).
\end{equation*}
\end{itemize}
Properties ($I_1$) and ($I_2$) are standard and hold also for the Krasnoselskii genus. Property $(I_3)$ has been proven in \cite[Corollary 4.11, Remark 4.12]{Ba93a}. We can now prove Lemma~\ref{cohomo}.

\begin{proof}
We fix $\gamma \in \Gamma_n$, and consider the map
\begin{equation*}
\phi = h_n \circ \gamma : [0,1] \times (S \cap V_n) \to V_{n-1} \times \R^+ =: X.
\end{equation*}
Since
$$
\phi_0(S \cap V_n) \subset V_{n-1} \times (0,\rho_n] =: X_0
$$
and
$$
\phi_1(S \cap V_n) \subset V_{n-1} \times [\rho_n,\infty) =: X_1,
$$
it follows from ($I_1$) -- ($I_3$) that
\begin{equation*}
i(\im(\phi) \cap X_0 \cap X_1) \gt i(S \cap V_n) = \dim V_n.
\end{equation*}
If there would not exist $(t,u) \in [0,1] \times (S \cap V_n)$ with $\gamma(t,u) \in B_n$, then
\begin{equation*}
\im(\phi) \cap X_0 \cap X_1 \subset (V_{n-1} \setminus \{0\})\times  \{\rho_0\}.
\end{equation*}
Now ($I_1$), ($I_2$) imply that
\begin{equation*}
i(\im(\phi) \cap X_0 \cap X_1) \lt i( (V_{n-1} \setminus \{0\})\times  \{\rho_0\}) = \dim V_{n-1},
\end{equation*}
contradicting $\dim V_{n-1} < \dim V_n$.
\end{proof}

It follows from Lemma~\ref{cohomo} that
\begin{equation}\label{c_n}
c_n = \inf_{\gamma \in \Gamma_n} \max_{\substack{t \in [0,1] \\ u \in S \cap V_n}} J(\gamma(t,u))
    \gt \inf_{ u \in B_n} J(u) \to \infty.
\end{equation}
We will show that $c_n$ is a critical value of $J$, which finishes the proof of Theorem~\ref{main}. We fix $n$ from now on.

\begin{lemma}\label{exist-PS}
There exists a Palais-Smale sequence $(u_k)_k$ for $J_S$ at the level $c_n$ satisfying
\begin{equation}\label{pohozaev}
|\nabla u_k|_2^2 + N\int_{\R^N} G(u_k) -\frac{N}{2}\int_{\R^N} g(u_k)u_k \to 0.
\end{equation}
\end{lemma}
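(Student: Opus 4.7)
The plan is to adapt Jeanjean's scaling trick from \cite{Je97} by introducing the auxiliary functional
\begin{equation*}
\tJ : \R \times S \to \R, \qquad \tJ(s,u) = J(s*u).
\end{equation*}
A direct computation based on the formula $(s*u)(x) = e^{sN/2}u(e^s x)$ gives
\begin{equation*}
\partial_s \tJ(s,u) = |\nabla(s*u)|_2^2 + N\int_{\R^N}G(s*u) - \frac{N}{2}\int_{\R^N}g(s*u)\,(s*u),
\end{equation*}
which is exactly the quantity appearing in \eqref{pohozaev} evaluated at $v=s*u$. Hence any sequence along which $\partial_s\tJ\to 0$ automatically produces \eqref{pohozaev} after rescaling.

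Next I would transfer the min-max construction to $\R \times S$. Choose $R_n$ so large that $|2s_n t - s_n|\le R_n$ for every $t\in[0,1]$, and let $\tilde\Gamma_n$ be the set of continuous maps $(\sigma,\eta):[0,1]\times(S\cap V_n)\to [-R_n,R_n]\times S$ with $\sigma$ even in $u$, $\eta$ odd in $u$, and $(t,u)\mapsto \sigma(t,u)*\eta(t,u)\in\Gamma_n$. The element $(t,u)\mapsto (2s_n t-s_n,u)$ belongs to $\tilde\Gamma_n$, so this class is nonempty. Since $\tJ(\sigma,\eta)=J(\sigma*\eta)$, the inclusion $\gamma\mapsto(0,\gamma)$ of $\Gamma_n$ into $\tilde\Gamma_n$ and the projection $(\sigma,\eta)\mapsto\sigma*\eta$ of $\tilde\Gamma_n$ into $\Gamma_n$ both preserve the functional values along paths, yielding
\begin{equation*}
\tilde c_n := \inf_{\tilde\gamma\in\tilde\Gamma_n}\max_{(t,u)}\tJ(\tilde\gamma(t,u)) = c_n.
\end{equation*}

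I would then apply Ekeland's variational principle on the complete metric space $[-R_n,R_n]\times S$ equipped with the natural product metric, starting from a sequence $\tilde\gamma_k\in\tilde\Gamma_n$ with $\max \tJ\circ\tilde\gamma_k\to c_n$. A standard argument produces $(s_k,u_k)\in [-R_n,R_n]\times S$ satisfying $\tJ(s_k,u_k)\to c_n$, $\partial_s\tJ(s_k,u_k)\to 0$ and $\|\partial_u\tJ(s_k,u_k)\|_{(T_{u_k}S)^*}\to 0$. Setting $v_k:=s_k*u_k\in S$, the first gives $J(v_k)\to c_n$, the $s$-derivative is precisely \eqref{pohozaev}, and because $\zeta\mapsto s_k*\zeta$ is a bounded linear isomorphism from $T_{u_k}S$ onto $T_{v_k}S$ whose norm and inverse norm are both dominated by $e^{R_n}$, the $u$-derivative condition translates into $\|\nabla J_S(v_k)\|\to 0$.

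The principal obstacle is the mismatch between the two Palais--Smale conditions: since the $\R$-action $s*\cdot$ is not an isometry of $(E,\|\cdot\|)$, the $u$-gradient of $\tJ$ only controls the $S$-gradient of $J$ at the rescaled point $v_k$ under a uniform bound on $|s_k|$. Capping $|\sigma|\le R_n$ inside $\tilde\Gamma_n$ secures this bound without changing the min-max level, while the linking property of Lemma~\ref{cohomo}, applied through the projection $(\sigma,\eta)\mapsto \sigma*\eta$, guarantees that $\tilde c_n\ge \inf_{B_n}J\to\infty$, so that the construction stays above the trivial level.
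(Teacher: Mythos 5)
Your overall strategy is essentially the paper's: introduce Jeanjean's stretched functional $\tJ(s,u)=J(s*u)$, observe that $\partial_s\tJ$ is exactly the Pohozaev-type quantity in \eqref{pohozaev}, transfer the min-max construction to the product space, prove $\tilde c_n=c_n$ via the two value-preserving maps $\gamma\mapsto(0,\gamma)$ and $\tilde\gamma\mapsto m\circ\tilde\gamma$, and then extract an almost-critical sequence by an Ekeland-type argument. Your computation of $\partial_s\tJ$ is correct, as is the remark that $\zeta\mapsto s*\zeta$ is an isomorphism $T_uS\to T_{s*u}S$ whose $H^1$-operator norm (and that of its inverse) is controlled by $e^{|s|}$, so that a bound on $|s_k|$ lets you convert $\partial_u\tJ(s_k,u_k)\to0$ into $\|\nabla J_S(s_k*u_k)\|\to0$.

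The genuine gap lies in the device you introduce to obtain that bound: you replace $\R\times S$ by the slab $[-R_n,R_n]\times S$, a manifold with boundary, and then invoke ``a standard argument'' producing $(s_k,u_k)$ with $\tJ(s_k,u_k)\to c_n$, $\partial_s\tJ(s_k,u_k)\to0$ and $\partial_u\tJ(s_k,u_k)\to0$. On a space with boundary, Ekeland/minimax arguments give only one-sided control of the derivative in directions leaving the slab; if the almost-critical points accumulate on $\{|s|=R_n\}$ you cannot conclude $\partial_s\tJ\to0$, and that is precisely the conclusion \eqref{pohozaev} that you need. You show neither that the points are interior nor that the capped class is compatible with the perturbations/deformations your ``standard argument'' relies on. The paper avoids this issue altogether: it works on all of $\R\times S$ and uses the invariance $\tJ(s,u)=\tJ(0,s*u)$ to replace the Palais-Smale sequence $(s_k,u_k)$ by $(0,s_k*u_k)$, i.e.\ to normalize $s_k=0$, after which both the Palais-Smale property for $J_S$ and \eqref{pohozaev} are immediate. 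Your argument can be repaired in the same spirit: since $\tilde c_n=c_n$ is already attained along paths of the form $(0,\gamma)$ with $\gamma\in\Gamma_n$, the Ekeland-type minimax principle lets you choose the almost-critical points at distance $o(1)$ from such paths, which forces $s_k\to0$; then the cap $R_n$ is superfluous and your transfer estimate (with $e^{R_n}$ replaced by a factor tending to $1$) goes through.
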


 For the proof we recall the stretched functional from \cite{Je97}:
\begin{equation*}
\tJ : \R \times E \to \R ,\quad (s,u) \mapsto J(s*u).
\end{equation*}
Now we define
\begin{multline*}
\tilde{\Gamma}_n =  \Big\{ \tilde{\gamma} : [0,1]\times (S \cap V_n) \to \R \times S \mid \tilde{\gamma} \text{ is continuous, odd in $u$,} \\
\text{and such that } m \circ \tilde{\gamma} \in \Gamma_n \Big\},
\end{multline*}
where $m(s,u) = s*u$, and
\begin{equation*}
\tilde{c}_n = \inf_{\tilde{\gamma} \in \tilde{\Gamma}_n} \max_{\substack{t \in [0,1] \\ u \in S \cap V_n}} \tJ(\tilde{\gamma}(t,u)).
\end{equation*}

\begin{lemma}
We have $\tilde{c}_n = c_n$.
\end{lemma}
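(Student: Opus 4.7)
The plan is to establish the two inequalities $\tilde{c}_n \le c_n$ and $c_n \le \tilde{c}_n$ separately, exploiting the fact that the $\R$-action $m(s,u)=s*u$ preserves $S$ and that $\tJ = J\circ m$.

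For the inequality $\tilde{c}_n \le c_n$, I would take any $\gamma \in \Gamma_n$ and lift it trivially by setting
\[
\tilde{\gamma}(t,u) = (0,\gamma(t,u)) \in \R\times S.
\]
This is continuous and odd in $u$, and since $0*v = v$ for every $v\in E$ we have $m\circ\tilde{\gamma} = \gamma \in \Gamma_n$, so $\tilde{\gamma}\in\tilde{\Gamma}_n$. Moreover $\tJ(\tilde{\gamma}(t,u)) = J(0*\gamma(t,u)) = J(\gamma(t,u))$, hence the max of $\tJ\circ\tilde{\gamma}$ equals the max of $J\circ\gamma$. Taking the infimum over $\gamma$ yields $\tilde{c}_n \le c_n$.

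For the converse inequality $c_n \le \tilde{c}_n$, I would take any $\tilde{\gamma} \in \tilde{\Gamma}_n$ and project: set $\gamma := m\circ\tilde{\gamma}:[0,1]\times(S\cap V_n)\to S$. By the very definition of $\tilde{\Gamma}_n$, this $\gamma$ lies in $\Gamma_n$. Since $\tJ(\tilde{\gamma}(t,u)) = J(m(\tilde{\gamma}(t,u))) = J(\gamma(t,u))$, the maxima agree, and therefore $c_n \le \max J(\gamma(t,u)) = \max \tJ(\tilde{\gamma}(t,u))$. Taking the infimum over $\tilde{\gamma}\in\tilde{\Gamma}_n$ yields $c_n \le \tilde{c}_n$.

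There is essentially no obstacle here: the equality reflects the fact that the stretched setting adds a free $\R$-parameter that is neutralised by $m$, and $\tilde{\Gamma}_n$ was defined precisely so that composition with $m$ lands in $\Gamma_n$. The only small point worth verifying is that the trivial lift $u\mapsto(0,\gamma(t,u))$ is indeed admissible (continuity, oddness in $u$, and $m\circ\tilde{\gamma}=\gamma\in\Gamma_n$), which is immediate.
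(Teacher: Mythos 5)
Your argument is correct and is essentially identical to the paper's proof: the paper uses the same two maps $\gamma\mapsto(0,\gamma)$ and $\tilde\gamma\mapsto m\circ\tilde\gamma$ together with the identities $\tJ((0,\gamma(t,u)))=J(\gamma(t,u))$ and $J(m(\tilde\gamma(t,u)))=\tJ(\tilde\gamma(t,u))$, from which both inequalities follow. Nothing is missing.
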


\begin{proof}
The maps
\begin{equation*}
\Phi : \Gamma_n \to \tilde{\Gamma}_n ,\quad \gamma \mapsto [(0,\gamma):\ (t,u)\mapsto(0,\gamma(t,u))],
\end{equation*}
and
\begin{equation*}
\Psi : \tilde{\Gamma}_n \to \Gamma_n ,\quad \tilde\gamma \mapsto [m\circ\gamma:\ (t,u)\mapsto m(\tilde\gamma(t,u))],
\end{equation*}
satisfy
\begin{equation*}
\tilde{J} (\Phi(\gamma)(t,u)) = J(\gamma(t,u)), \quad\text{and}\quad
J (\Psi(\tilde{\gamma})(t,u)) = \tilde{J}(\tilde{\gamma}(t,u)).
\end{equation*}
The lemma is an immediate consequence.
\end{proof}

\begin{altproof}{Lemma~\ref{exist-PS}}
By Ekeland's variational principle there exists a Palais-Smale sequence $(s_k,u_k)_k$ for $\tJ|_{\R\times S}$ at the level $c_n$. From $\tJ(s,u)=\tJ(0,s*u)$ we deduce that $(0,s_k*u_k)_k$ is also a Palais-Smale sequence for $\tJ|_{\R\times S}$ at the level $c_n$. Thus we may assume that $s_k=0$. This implies, firstly, that $(u_k)_k$ is a Palais-Smale sequence for $J_S$ at the level $c_n$, and secondly, using $\partial_s\tJ(0,u_k) \to 0$, that \eqref{pohozaev} holds.
\end{altproof}

\begin{lemma}\label{PS}
If the sequence $(u_k)_k$ in $S$ satisfies $J_S'(u_k) \to 0$, $J_S(u_k) \to c > 0$, and \eqref{pohozaev}, then it is bounded and has a convergent subsequence.
\end{lemma}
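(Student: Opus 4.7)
The plan is to proceed in three stages: first establish boundedness of $(u_k)$ in $E$; then extract a weak limit $u$ satisfying the Euler--Lagrange equation with multiplier $\lambda = \lim \lambda_k$ and show $\lambda<0$; and finally upgrade weak convergence to strong convergence.

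To obtain boundedness, combine $J_S(u_k)=c+o(1)$, which reads $\int_{\R^N}G(u_k)=\tfrac12|\nabla u_k|_2^2-c+o(1)$, with \eqref{pohozaev} to extract
\begin{equation*}
\int_{\R^N} g(u_k)u_k \;=\; \tfrac{N+2}{N}|\nabla u_k|_2^2-2c+o(1).
\end{equation*}
The lower inequality $\alpha G(s)\leq g(s)s$ from ($H_2$) then gives
\begin{equation*}
\Bigl(\tfrac{\alpha}{2}-\tfrac{N+2}{N}\Bigr)|\nabla u_k|_2^2 \;\leq\; (\alpha-2)c + o(1),
\end{equation*}
and since $\alpha>2+4/N$ the coefficient on the left is strictly positive. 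Together with $|u_k|_2=1$ this bounds $(u_k)$ in $E$.

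The compact embedding $E\hookrightarrow L^p(\R^N)$ for $p\in(2,2^*)$ then produces a subsequence with $u_k\rightharpoonup u$ in $E$, $u_k\to u$ in $L^p$ and a.e. The relation $\nabla J(u_k)-\lambda_k u_k\to 0$ furnished by the Palais--Smale condition, tested against $u_k$ and using $|u_k|_2=1$, gives
\begin{equation*}
\lambda_k \;=\; |\nabla u_k|_2^2 - \int_{\R^N}g(u_k)u_k + o(1) \;=\; -\tfrac{2}{N}|\nabla u_k|_2^2 + 2c + o(1),
\end{equation*}
so $\lambda_k\to\lambda$ along a subsequence, and a standard Nemitski argument based on the subcritical growth bound $|g(s)|\leq C(|s|^{\alpha-1}+|s|^{\beta-1})$ (a consequence of ($H_2$)) lets one pass to the limit, yielding $-\Delta u-g(u)=\lambda u$. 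Symmetrically, the upper bound $g(s)s\leq \beta G(s)$ produces $\liminf|\nabla u_k|_2^2\geq \tfrac{2N(\beta-2)}{N\beta-2N-4}\,c$, and inserting this into the identity above shows
\begin{equation*}
\lambda \;\leq\; \frac{2c\bigl((N-2)\beta - 2N\bigr)}{N\beta-2N-4},
\end{equation*}
which is strictly negative in both cases $N=2$ and $N\geq 3$ precisely because $\beta<2^*$. Establishing this strict negativity of $\lambda$ from ($H_2$) and the Pohozaev identity is the crux of the argument, as it provides the coercivity needed in the last step.

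For strong convergence, the same subcritical growth on $g$ together with H\"older and the strong $L^\alpha, L^\beta$ convergence yields $\int_{\R^N} g(u_k)(u_k-u)\to 0$. Testing $\nabla J(u_k)-\lambda_k u_k\to 0$ against $u_k-u$ and treating the remaining linear terms by weak convergence produces
\begin{equation*}
|\nabla(u_k-u)|_2^2 - \lambda_k|u_k-u|_2^2 \;\to\; 0.
\end{equation*}
Since $\lambda<0$, both summands are eventually non-negative and each tends to zero. In particular $u_k\to u$ in $E$ and $|u|_2=1$, completing the proof.
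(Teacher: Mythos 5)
Your proposal is correct and follows essentially the same route as the paper: boundedness from combining $J_S(u_k)\to c$, \eqref{pohozaev} and the lower bound $\alpha G\le g(s)s$, negativity of the limiting multiplier from the upper bound $g(s)s\le\beta G$ (the paper delegates both of these computations to Jeanjean \cite{Je97}), and strong convergence via the compact radial embedding once $\bar\lambda<0$. The only cosmetic difference is the final step, where you test $\nabla J(u_k)-\lambda_k u_k\to0$ against $u_k-u$ instead of invoking the invertibility of $-\Delta-\bar\lambda$ as the paper does; the two arguments are equivalent.
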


\begin{proof}
That $(u_k)_k$ is bounded in $E$, hence $u_k\rightharpoonup \bar u$ along a subsequence, can be proved as in \cite[pp.~1644-1644]{Je97}. The compactness of the embedding $H^1_{\text{rad}}(\R^N) \hookrightarrow L^p(\R^N)$ yields $g(u_k) \to g(\bar u)$ in $E^*$. From $J_S'(u_k) \to 0$ it follows that
\begin{equation}\label{eq:u_k}
 -\Delta u_k - \lambda_ku_k - g(u_k) \to 0 \quad \text{in $E^*$}
\end{equation}
for some sequence $\lambda_k \in \R$. Using $J_S(u_k) \to c > 0$ and \eqref{pohozaev}, we deduce as in  \cite[Lemma~2.5]{Je97} that $\lambda_k \to \bar\lambda < 0$ along a subsequence. Then $-\Delta - \bar\lambda$ is invertible and \eqref{eq:u_k} implies $u_k \to (-\Delta - \bar\lambda)^{-1}(g(\bar u))$ in $E$.
\end{proof}

Theorem~\ref{main} follows from \eqref{c_n}, Lemma~\ref{exist-PS} and Lemma~\ref{PS}.

\section*{Acknowledgements}
The second author would like to warmly thank the members of the
Justus-Liebig-University Giessen, where this work was done, for their
invitation and hospitality.

{\sc Address of the authors:}\\[1em]
\parbox{8cm}{Thomas Bartsch\\
 Mathematisches Institut\\
 Universit\"at Giessen\\
 Arndtstr.\ 2\\
 35392 Giessen\\
 Germany\\
 Thomas.Bartsch@math.uni-giessen.de}
\parbox{7cm}{S\'ebastien de Valeriola\\
 D\'epartement de Math\'ematiques\\
 B\^{a}timent Marc de Hemptinne\\
 Chemin du Cyclotron 2\\
 1348 Louvain-la-Neuve\\
 Belgium\\
 sebastien.devaleriola@uclouvain.be}


\begin{thebibliography}{99}

\bibitem{Ba93a}
Bartsch, T.:
{ \em Topological methods for variational problems with symmetries.}
Lecture Notes in Mathematics, 1560. Springer-Verlag, Berlin 1993.

\bibitem{Ba93b}
Bartsch, T.:
{\em Infinitely many solutions of a symmetric Dirichlet Problem.}
Nonlin. Anal. {\bf 20} (1993), 1205-1216.

\bibitem{Berestycki-Lions:1983a}
Berestycki, H., and Lions,. P.-L.:
{\em Nonlinear scalar field equations, I.}
Arch. Rat. Mech. Anal. {\bf 82} (1983), 313-345.

\bibitem{Berestycki-Lions:1983b}
Berestycki, H., and Lions,. P.-L.:
{\em Nonlinear scalar field equations, II.}
Arch. Rat. Mech. Anal. {\bf 82} (1983), 347-375.

\bibitem{CoFl62}
Conner, P. E., and Floyd, E. E. :
{\em Fixed point free involutions and equivariant maps. II.}
Trans. Amer. Math. Soc. {\bf 105}, 222-228.

\bibitem{FaRa77}
Fadell, E. R., and Rabinowitz, P. H.:
{\em Bifurcation for odd potential operators and an alternative topological index.}
J. Funct. Anal. {\bf 26} (1977), 48-67.

\bibitem{Hajaiej-Stuart:2005}
Hajaiej, H., and Stuart, C.:
{\em Existence and non-existence of Schwarz symmetric
ground states for elliptic eigenvalue problems.}
Ann. Mat. Pura Appl. {\bf 184} (2005), 297-314.

\bibitem{Je97}
Jeanjean, L.:
{\em Existence of solutions with prescribed norm for semilinear elliptic equations.}
Nonlin. Anal. {\bf 28} (1997), no. 10, 1633-1659.

\bibitem{Kevrekidis-etal:2008}
Kevrekidis, P. G., Frantzeskakis, d. J., and Carretero-Gonzalez, R. (eds.):
{\em Emergent Nonlinear Phenomena in Bose-Einstein Condensation.}
Springer-Verlag, Berlin 2008.

\bibitem{Willem:1996}
M. Willem:
{\em Minimax Methods.}
Birkh\"auser, Boston 1996.

\end{thebibliography}
\end{document}